\theoremstyle{plain}
\newtheorem{theorem}{Theorem}[section]
\theoremstyle{remark}
\numberwithin{equation}{section}
\newcommand{\C}{\mathbb{C}}
\newcommand{\R}{\mathbb{R}}
\newcommand{\N}{\mathbb{N}}
\newcommand{\F}{\mathcal{F}}
\renewcommand{\Re}{\operatorname{Re}}
\renewcommand{\S}{\mathcal{S}}
\newcommand{\eps}{\varepsilon}
\newcommand{\fy}{\varphi}
\DeclareMathOperator{\supp}{supp}
\newcommand{\qtq}[1]{\quad\text{#1}\quad}
\newcommand{\pt}{\partial}
\newcommand{\ti}{\tilde}
\newcommand{\LR}[1]{{\langle #1 \rangle}}
\newcommand{\M}{\mathcal{M}}
\begin{document}

\title[Failure of scattering]{Failure of scattering to solitary waves for long-range nonlinear Schr\"odinger equations}

\author{Jason Murphy}
\address{Department of Mathematics and Statistics, Missouri University of Science \& Technology, Rolla, MO, USA}
\email{jason.murphy@mst.edu}

\author{Kenji Nakanishi}
\address{Research Institute for Mathematical Sciences,
Kyoto University}
\email{kenji@kurims.kyoto-u.ac.jp}

\begin{abstract} We consider nonlinear Schr\"odinger equations with either power-type or Hartree nonlinearity in the presence of an external potential.  We show that for long-range nonlinearities, solutions cannot exhibit scattering to solitary waves or more general localized waves.  This extends the well-known results concerning non-existence of non-trivial scattering states for long-range nonlinearities.
\end{abstract}

\maketitle

\section{Introduction}

We consider nonlinear Schr\"odinger equations of the form
\begin{equation}\label{nls}
 i\pt_t u  + \Delta u = Vu + F(u)
\end{equation}
for a complex-valued function of space-time $u:\R_t\times\R_x^d\to\C$ with $d\geq 1$. 

We take $F(u)$ to be either a power-type or Hartree-type nonlinearity, i.e.
\begin{equation}\label{F1}
F(u) = \mu |u|^p u \qtq{or} F(u) = \mu (|x|^{-\frac{dp}{2}}\ast |u|^2)u.
\end{equation}
We are interested in `long-range' nonlinearities, corresponding to $0<p\leq \tfrac{2}{d}$.  

We take $V:\R_t\times\R_x^d\to\C$ to be an external potential, which may be complex-valued and time-dependent.  For a given choice of $p$, we impose the following constraints on the potential:
\begin{equation} \label{cond V}
V\in L^\infty(\R;X),\qtq{where}
  X=L^{\frac{2}{p}-}(\R^d)+\begin{cases}
L^{\frac{d}{2}}(\R^d) &(d\ge 3),\\
L^{1+}(\R^2) &(d=2),\\
\M(\R)&(d=1).
 \end{cases} 
\end{equation}
Here and below we write $a\pm$ to denote $a\pm\eps$ for some sufficiently small $\eps>0$.  We exclude the $L_t^\infty L_x^1$ endpoint when $d=2$ due to the failure of the endpoint Strichartz estimate in that dimension. We write $\M(\R^d)$ for the Banach space of complex Radon measures with finite variation on $\R^d$.  By considering the extension to measures in the case $d=1$, we can include some interesting cases such as the delta potential (or even multiple delta potentials), as we will discuss in more detail below.  Under the assumptions of our main result (see Theorem~\ref{T2} below), we will always have $\max\{1,\tfrac{d}{2}\}<\tfrac{2}{p}$.

In the absence of an external potential (i.e. when $V\equiv 0$), it is well-known that for long-range nonlinearities, no solution (other than zero) can exhibit asymptotically linear behavior \cite{Strauss, Barab, Glassey}.  Related results have since been established in many different settings; see for example \cite{BSV, CO, MM, HLN, HNN, Shimomura, ST}.  The purpose of this note is to establish an analogous result concerning scattering to solitary waves or more general waves that are localized in space.  

Our main result is the following.

\begin{theorem}\label{T2} Let $\mu\in\C\backslash\{0\}$, $0<p<1$, and $p\le \tfrac{2}{d}$. 
Suppose that $u(t)$ is a forward-global solution to \eqref{nls} with \eqref{F1} and \eqref{cond V} that admits a decomposition of the form
\begin{equation}\label{u-decomposition}
 u(t) = l(t) + e^{it\Delta}v_+  + o(1)\qtq{in}L^2(\R^d) \qtq{as}t\to\infty, 
\end{equation}
where $l\in L^\infty_t((0,\infty);(L^2\cap L^q)(\R^d))$ for some $q<2$ and $v_+\in L^2(\R^d)$.  Then $v_+\equiv 0$. 
\end{theorem}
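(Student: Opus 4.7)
The plan is to pair the equation against a Schwartz function $\phi$ and extract a contradiction from the long-range character of $F$. From $\pt_t(e^{-it\Delta}u) = -ie^{-it\Delta}(Vu+F(u))$, testing with $\phi\in\S(\R^d)$ and integrating yields
\begin{equation*}
\langle e^{-it\Delta}u(t), \phi\rangle - \langle e^{-it_0\Delta}u(t_0), \phi\rangle = -i\int_{t_0}^t \langle Vu + F(u), e^{is\Delta}\phi\rangle\,ds.
\end{equation*}
The dispersive estimate $\|e^{it\Delta}\phi\|_{L^{q'}}\lesssim t^{-d(1/q-1/2)}\|\phi\|_{L^q}$ combined with $l\in L^\infty_t L^q$, $q<2$, will give $\langle l(t), e^{it\Delta}\phi\rangle\to 0$, hence via \eqref{u-decomposition}, $\langle e^{-it\Delta}u(t), \phi\rangle\to\langle v_+, \phi\rangle$. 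Applying the same reasoning at $t_0$, I will conclude that the improper integral $\int_{t_0}^\infty \langle Vu + F(u), e^{is\Delta}\phi\rangle\,ds$ tends to $0$ as $t_0\to\infty$.

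Assuming $v_+\not\equiv 0$ for contradiction, I would choose a Schwartz $\phi$ close to $v_+$ in $L^2$, so that $\langle v_+,\phi\rangle\ne 0$. Setting $w := e^{is\Delta}v_+$, $\tilde w := e^{is\Delta}\phi$, and $u = w + l + r$ with $r(t)\to 0$ in $L^2$, the dominant contribution to $\langle F(u), \tilde w\rangle$ is the diagonal piece
\begin{equation*}
\langle F(w), \tilde w\rangle \sim \mu\,c(v_+,\phi)\,s^{-dp/2}\qtq{as}s\to\infty,
\end{equation*}
where $c(v_+,\phi)\ne 0$ for suitably chosen $\phi$ (in the power case $c$ is a multiple of $\int|\hat v_+|^p\hat v_+\overline{\hat\phi}\,dy$, obtained via stationary phase; a similar formula involving Hardy--Littlewood--Sobolev holds in the Hartree case). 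Since $p\le 2/d$, $\int_{t_0}^T s^{-dp/2}\,ds$ diverges as $T\to\infty$, producing a main term of magnitude $\gtrsim T^{1-dp/2}$ (or $\log T$ at $p=2/d$).

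The main obstacle will be to show that the remainder $\langle F(u)-F(w),\tilde w\rangle$ and the potential term $\langle Vu,\tilde w\rangle$ are $o$ of the main term in the integrated sense. For the nonlinear remainder, the pointwise bound $|F(u)-F(w)|\lesssim (|l|+|r|)(|w|+|l|+|r|)^p$ combined with the subadditivity $(A+B+C)^p\le A^p+B^p+C^p$ (valid since $p<1$) reduces matters to finitely many terms of the form $\int|l|^{j}|\tilde w|^{p+2-j}\,dx$ for $j\in\{1,p+1\}$, plus analogues with $r$ replacing $l$ or with $|w|$ replacing $|\tilde w|$. Each $l$-term, bounded via H\"older with $\|\tilde w\|_{L^r}\lesssim s^{-d(1/2-1/r)}$, will have decay $O(s^{-dp/2-\delta})$ with $\delta=\delta(q,p)>0$, exploiting $q<2$; each $r$-term will carry a prefactor $\sup_{s\ge t_0}\|r(s)\|_{L^2}\to 0$. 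The potential term will be handled analogously using \eqref{cond V}, with the $L^{2/p-}$ component of $X$ tailored to yield marginally faster decay than $s^{-dp/2}$, while the $L^{d/2}/L^{1+}/\M$ component is absorbed by standard H\"older/Strichartz bounds. Choosing $\phi$ close to $v_+$ and $t_0$ large enough that all error contributions are small, the divergent main term will contradict the vanishing of the improper integral, forcing $v_+\equiv 0$.
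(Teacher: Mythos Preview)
Your strategy coincides with the paper's: pair the equation against a free wave $e^{is\Delta}\phi$, use the $MD\F M$ factorization (what you call ``stationary phase'') to identify the main contribution as $c(v_+,\phi)\,s^{-dp/2}$, and show all remaining terms are negligible in the integrated sense. For the power-type nonlinearity this works, and your physical-space decomposition $u=w+l+r$ is equivalent to the paper's passage to the rescaled variables $\ti u,\ti w,\ti l$. One small point: choosing $\phi$ close to $v_+$ in $L^2$ does not by itself force $c(v_+,\phi)=\int|\hat v_+|^p\hat v_+\overline{\hat\phi}\neq 0$, since $\hat v_+$ need not lie in $L^{p+2}$; but since $|\hat v_+|^p\hat v_+\in L^{2/(1+p)}$ is nonzero, a suitable $\phi\in\S$ exists, so this is easily repaired.

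The genuine gap is in the Hartree case. Your pointwise bound $|F(u)-F(w)|\lesssim(|l|+|r|)(|w|+|l|+|r|)^p$ is a local inequality that simply does not hold for the nonlocal nonlinearity $F(u)=(|x|^{-dp/2}*|u|^2)u$. More importantly, even if you expand $F(u)-F(w)$ correctly, the term $\bigl\langle(|x|^{-dp/2}*|l|^2)\,w,\,\tilde w\bigr\rangle$ is \emph{not} $O(s^{-dp/2-\delta})$: since $|l|^2$ is only bounded in $L^1$ (the $L^q$ hypothesis with $q<2$ gives no improvement on $\||l|^2\|_{L^1}$), one gets at best
\[
\bigl|\bigl\langle |l|^2,\,|x|^{-dp/2}*(\bar w\tilde w)\bigr\rangle\bigr|
\le \|l\|_{L^2}^2\,\bigl\||x|^{-dp/2}*(\bar w\tilde w)\bigr\|_{L^\infty}
\lesssim \|l\|_{L^2}^2\,\|v_+\|_{L^2}\,s^{-dp/2},
\]
which is exactly the order of the main term, not smaller. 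The paper deals with this by \emph{keeping} the $|\tilde l|^2$ contribution in the leading part, writing
\[
\tfrac{1}{\mu}\langle F(\ti u),\ti w\rangle=\bigl\langle |x|^{-dp/2}*(|\hat v_+|^2+|\ti l|^2),\,\overline{\hat v_+}\hat\varphi\bigr\rangle+o(1),
\]
and then exploiting positivity: one chooses $\hat\varphi$ to approximate a bounded truncation $\hat v_+'$ of $\hat v_+$, so that the $|\ti l|^2$ piece contributes with a nonnegative sign and can be discarded from below. Without this positivity argument (or an equivalent device), the Hartree case does not close.
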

For the more precise meaning of solution, as well as the local well-posedness for \eqref{nls} in $L^2(\R^d)$, see Section \ref{S:preliminaries}. 

The function $l(t)$ in Theorem~\ref{T2} may take on a wide range of forms.  We have in mind a superposition of waves that are localized for all time: solitary waves, breathers, quasi-periodic solutions, almost periodic solutions, and so on (provided the equation \eqref{nls} supports such solutions!).

The analogue of Theorem~\ref{T2} with $V=l=0$ was first established for power-type nonlinearities with $p\leq \min\{1,\tfrac{2}{d}\}$ and for the Hartree nonlinearity with $d=3$ and $p=\tfrac{2}{d}$ \cite{Strauss, Glassey}. Subsequently, \cite{Barab} filled in the gap $d=1$ and $1<p\leq 2$ for the power-type case.  All of these works were based on an argument of Glassey \cite{GlasseyKG}, which is also at the heart of our arguments below.  While we found no reference that explicitly treats the remaining Hartree nonlinearities, we remark that the same argument can be modified to handle the nonlinearities $F(u)=\mu(|x|^{-\frac{dp}{2}}\ast |u|^2)u$ for $0<p\leq1$ and $p\leq \tfrac{2}{d}$.

The works \cite{Strauss, Glassey, Barab} require assumptions  beyond merely $u,v_+\in L^2$.  In particular, these works assume that $v_+\in L^1$ in order to access the dispersive estimate, while \cite{Barab} additionally requires $u(0)\in H^1\cap \F H^1$ (and a defocusing nonlinearity) in order to utilize the pseudoconformal energy estimate.  In \cite[Theorem~7.5.2]{Cazenave}, the result is stated for power-type nonlinearities with $p\leq \min\{1,\tfrac{2}{d}\}$ and $u_0\in H^1\cap \F H^1$, but an examination of the proof shows that one needs only $u,v_+\in L^2$.  On the other hand, the weighted assumption seems to be necessary when addressing the remaining cases $d=1$ and $1<p\leq 2$.

In Theorem~\ref{T2}, our scattering assumption is solely in $L^2$, and no further decay is assumed on $u$ or the scattering state $v_+$.  Consequently our arguments break down when $p>1$ (as in previous works).  In fact, our arguments already break down at $p=1$ when treating the soliton part as well as the potential (cf. the proof of Theorem~\ref{T2} below).  By imposing some additional assumptions on $l(t)$ and $V(t)$, however, we can recover  a similar result for power-type case with $p=1\leq\tfrac{2}{d}$.  Moving beyond this range without imposing additional assumptions on $v_+$ or $u$ seems to be a difficult problem.  The precise result we will prove is the following.

\begin{theorem}\label{T3} Let $\mu\in\C\backslash\{0\}$ and $p=1\leq\tfrac{2}{d}$.  Suppose $u(t)$ is a forward-global solution to \eqref{nls} with a power-type nonlinearity in \eqref{F1} and $V\in L_t^\infty(L^2\cap L^{2-})$.  Suppose that $u$ admits a decomposition of the form \eqref{u-decomposition} with $v_+\in L^2$ and $l\in L_t^\infty(L^2\cap L^{2-})$. Assume additionally that there exists a measure $\nu\in\M(\R^d)$ that is singular with respect to Lebesgue measure and obeys
\begin{equation}\label{T3-hypothesis}
\langle\nu,\varphi\rangle \geq  \limsup_{t\to\infty} \langle t^d\bigl[|l(t,tx)|^2+|V(t,tx)|^2\bigr],\varphi\rangle \end{equation}
for any nonnegative $\varphi\in C_0(\R^d)$\footnote{We write $C_0$ for the closed subspace of $L^\infty$ consisting of continuous functions decaying at $|x|\to\infty$.}. Then $v_+\equiv 0$.
\end{theorem}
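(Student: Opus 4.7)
My plan is to adapt the Glassey--Strauss strategy behind Theorem~\ref{T2} to the threshold case $p=1$, using the hypothesis \eqref{T3-hypothesis} to separate the concentrated profile $l$ and potential $V$ from the dispersive tail $\Phi(t):=e^{it\Delta}v_+$. Writing $u=l+\Phi+r$ with $\|r(t)\|_{L^2}\to 0$, a direct computation (using self-adjointness of $\Delta$) gives
\[
\tfrac{d}{dt}\langle u(t),\Phi(t)\rangle_{L^2} = -i\int\bigl(V(t)u(t)+F(u(t))\bigr)\overline{\Phi(t)}\,dx,
\]
and the pairing on the left is uniformly bounded in $t$ (each factor lies in $L^\infty_t L^2_x$), so
\begin{equation}\label{T3-bound-sketch}
\Bigl|\int_{T_1}^{T_2}\int\bigl(Vu+F(u)\bigr)\overline{\Phi}\,dx\,dt\Bigr|\le C\qquad\text{for all } T_2\ge T_1\ge 1.
\end{equation}
The goal is to contradict this by isolating a divergent main term in the nonlinear integrand.

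Reducing to $\hat v_+\in C_c^\infty(\R^d)\setminus\{0\}$ by Fourier truncation (the tail error is absorbed into $r$ as an $L^2$-small perturbation), the stationary-phase asymptotic for $\Phi$ yields
\[
\int|\Phi(t)|^3\,dx=(c_d+o(1))\,t^{-d/2}\|\hat v_+\|_{L^3}^3,\qquad c_d\neq 0,
\]
so $\int_{T_1}^{T_2}\mu\int|\Phi|^3\,dx\,dt$ diverges like $\mu\log T_2$ if $d=2$ and like $\mu T_2^{1/2}$ if $d=1$. Expanding $F(u)\overline\Phi=\mu|\Phi|^3+\mu(|u|u-|\Phi|\Phi)\overline\Phi$ and using the pointwise inequality $\bigl||u|u-|\Phi|\Phi\bigr|\le 2(|u|+|\Phi|)(|l|+|r|)$, the deviation is controlled by sums of $\int|a||b||\Phi|\,dx$ and $\int|a||\Phi|^2\,dx$ for $a,b\in\{l,r\}$. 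Likewise $\int Vu\overline\Phi\,dx=\int V|\Phi|^2\,dx+\int V(l+r)\overline\Phi\,dx$. Terms involving $r$ are absorbed by $\|r(t)\|_{L^2}\to 0$ together with Strichartz estimates on $\Phi$; terms like $\int|l|^2|\Phi|$ or $\int|V||l|\cdot|\Phi|$ are lower order via H\"older with $l,V\in L^\infty_t(L^2\cap L^{2-})$ and dispersive bounds on $\Phi$.

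The critical cross terms, having exactly the same rate $t^{-d/2}$ as the main term, are $\int|f||\Phi|^2\,dx$ for $f\in\{l,V\}$; this is where \eqref{T3-hypothesis} is essential. After rescaling $x=ty$,
\[
\int|f||\Phi|^2\,dx=\int t^d|f(t,ty)|\cdot|\Phi(t,ty)|^2\,dy,
\]
and the integrand is asymptotically supported on a fixed compact set (since $\hat v_+$ is compactly supported), modulo a tail controlled by the stationary-phase error. Fix $\delta>0$, a Lebesgue-null Borel set $A$ carrying $\nu$, and a cutoff $\chi_\delta\in C_c(\R^d;[0,1])$ equal to $1$ on an open neighborhood $U_\delta\supset A$ with $|U_\delta|<\delta$. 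Splitting $1=\chi_\delta+(1-\chi_\delta)$ on the relevant compact set and applying Cauchy--Schwarz to each piece,
\[
\int t^d|f||\Phi|^2\chi\,dy\le\Bigl(\int t^d|f|^2\chi\,dy\Bigr)^{1/2}\Bigl(\int t^d|\Phi|^4\chi\,dy\Bigr)^{1/2}.
\]
By \eqref{T3-hypothesis} the first factor is at most $\sqrt{\nu(\chi)}+o_t(1)$, i.e.\ $O(1)$ when $\chi=\chi_\delta$ and $o_t(1)$ when $\chi=1-\chi_\delta$ (this function vanishes $\nu$-a.e., as $A\subset U_\delta$ and $\nu(A^c)=0$). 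The asymptotic for $\Phi$ makes the second factor $O(t^{-d/2})\bigl(\int|\hat v_+(y/2)|^4\chi\,dy\bigr)^{1/2}$, hence $O(t^{-d/2}\sqrt\delta)$ for $\chi=\chi_\delta$ (as $|U_\delta|<\delta$ and $\hat v_+$ is bounded) and $O(t^{-d/2})$ for $\chi=1-\chi_\delta$. Combining, $\int|f(t)||\Phi(t)|^2\,dx=(O(\sqrt\delta)+o_t(1))\,t^{-d/2}$, whose time integral is $(O(\sqrt\delta)+o_{T_1}(1))$ times the divergence rate of the main term—strictly smaller once $\delta$ is small and $T_1$ large. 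This contradicts \eqref{T3-bound-sketch}, yielding $v_+\equiv 0$.

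The main obstacle is exactly this critical cross-term estimate: at $p=1$ the H\"older--Strichartz bounds used for Theorem~\ref{T2} no longer separate the critical cross terms from the nonlinear main term, and the singular measure hypothesis is the only new ingredient. Geometrically, it forces $l$ and $V$, viewed in the rescaled coordinates $y=x/t$, to concentrate on a Lebesgue-null set, while the dispersive profile $|\Phi(t,ty)|^2 t^d$ has an absolutely continuous limit supported disjointly from it; this disjointness is precisely the fact Cauchy--Schwarz exploits above.
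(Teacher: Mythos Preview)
Your overall strategy is sound and your handling of the ``linear'' cross terms $\int|f|\,|\Phi|^2\,dx$ for $f\in\{l,V\}$ via the cutoff $\chi_\delta$ is correct and elegant. However, there is a genuine gap: the terms $\int|l|^2|\Phi|\,dx$ and $\int|V|\,|l|\,|\Phi|\,dx$ are \emph{not} lower order as you claim. With only $l,V\in L^\infty_t(L^2\cap L^{2-})$ and the dispersive bound $\|\Phi(t)\|_{L^\infty}\lesssim t^{-d/2}$, the best H\"older estimate is
\[
\int|l|^2|\Phi|\,dx \le \|l\|_{L^2}^2\|\Phi\|_{L^\infty}=O(t^{-d/2}),
\]
exactly the rate of the main term; the $L^{2-}$ bound on $l$ does not help since $|l|^2$ then lies only in $L^1\cap L^{1-}$, which cannot be paired with any $L^s$ norm of $\Phi$ with $s<\infty$. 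Moreover, your cutoff argument does not transfer: after rescaling, $\int|l|^2|\Phi|\,dx\sim t^{-d/2}\int t^d|l(t,ty)|^2\,|\hat v_+(y/2)|\,dy$, and on $\chi_\delta$ the factor $t^d|l|^2$ behaves like a measure of total mass $\|l\|_{L^2}^2$ (with $\limsup$ bounded by $\nu(\chi_\delta)\le|\nu|$), while only a \emph{single} factor of $|\hat v_+|$ is available, so no smallness can be extracted from $|U_\delta|<\delta$. The resulting bound is $\langle\nu,|\hat v_+(\cdot/2)|\rangle+o_t(1)$, which has no reason to be small compared to $\|\hat v_+\|_{L^3}^3$.

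The fix is precisely the paper's device: rather than pairing with $e^{it\Delta}v_+$, pair with $w=e^{it\Delta}\varphi$ where $\hat\varphi\in\S$ is chosen to vanish on an open neighborhood of a full-$\nu$-measure set (possible because $\nu$ is Lebesgue-singular) while still satisfying $\langle|\hat v_+|\hat v_+,\hat\varphi\rangle\sim\|\hat v_+\|_{L^2}^2$. Equivalently, in your framework, you must truncate $\hat v_+$ not only to $C_c^\infty$ but also to vanish near $\operatorname{supp}\nu$; this costs nothing in $L^2$ (dominated convergence) and forces $\langle\nu,|\hat v_+|\rangle$ to be as small as you like. Once that is done, both the quadratic-in-$l$ term and your cutoff argument for the linear term become simultaneously controlled by $\langle\nu,|\hat\varphi|\rangle$, which is now negligible.
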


While the assumptions in Theorem~\ref{T3} are stronger than the ones in Theorem~\ref{T2}, they do in fact hold in the types of situations that we are interested in.  For example, the assumptions on $l$ and $V$ hold if we take $l$, as well as $V$, in the form
\[
l(t) = \sum_{k=1}^N l_k(t,x-c_k(t)) \qtq{with} \sup_{k,\delta>0}\lim_{t\to\infty} \|l_k(t,x)\|_{L^2(|x|>\delta t)} = 0,
\]
where each $l_k(t)$ is bounded in $L^2(\R^d)\cap L^{2-}(\R^d)$, and each $c_k(t)\in\R^d$ has the `limit velocity' $v_k:=\lim_{t\to\infty}c_k(t)/t\in\R^d$. In this case, the optimal $\nu$ is given by  
\[
 \nu=\sum_{k=1}^N \delta(x-v_k)\limsup_{t\to\infty}\|l_k(t)\|_{L^2(\R^d)}^2,
 \]
in addition to the corresponding components for $V$ if it is present.  This covers cases such as multi-solitons, breathers, and even more general localized waves, which are allowed to spread in $x$ sublinearly as $t\to\infty$.  On the other hand, if the number $N$ is increasing to $\infty$ or some of velocities $c_k(t)/t$ are oscillating as $t\to\infty$, then the assumption may be violated. 

As mentioned above, the generality of our assumptions on $V$ for $d=1$ permits us to include interesting cases such as the delta potential, which amounts to choosing $V$ to be a point mass at the origin.  In particular, we may compare our results to the recent results of \cite{CuccagnaMaeda}, in which the authors treat the NLS with a delta potential and power-type nonlinearities $|u|^p u$.  They show that for small $H^1$ initial data and any $p>0$, the solution admits a unique decomposition of the form
\[
u(t) = l(t) + v(t)
\]  
as $t\to\infty$.  In the case of an attractive potential, their $l(t)$ is a nonlinear bound state parametrized by some small $z(t)\in\C$. In treating the full range $p>0$, they select $v$ to be orthogonal to the linear bound state and are able to prove a global space-time estimate for $v$, specifically $e^{-\gamma |x|}v\in L_t^2 H_x^1$ for some $\gamma>0$.  For $p>1$, they use a refined orthogonality condition to define $v$.  Then, in addition to the space-time estimate, they can prove convergence of the parameter $z(t)$ up to a phase (`selection of the ground state').  For a repulsive potential, they have $l(t)=0$ and their result consists of the exponentially-weighted space-time estimate for $u$ itself.  Our Theorems~\ref{T2}~and~\ref{T3} assert that while the `dispersive' part $v(t)$ may decay, it will not contain a scattering component in the range $0<p\leq 1$.  The appearance of the exponent $p=1$ in both \cite{CuccagnaMaeda} and the present work appears to be coincidental.

Theorems~\ref{T2} and \ref{T3} provide an extension of the well-known results concerning non-existence of linear scattering for long-range nonlinearities to the more general setting of scattering to solitary waves, which is an ongoing area of active research interest.  As we will see, appropriate extensions of the arguments of \cite{Strauss, Barab, Glassey, GlasseyKG} suffice to show that the presence of long-range nonlinearities precludes the possibility of non-trivial scattering components in the description of the long-time behavior of solutions.

The rest of this paper is organized as follows: In Section~\ref{S:preliminaries}, we set up notation and collect some basic results.  In Section~\ref{S:T2}, we prove the main result, Theorem~\ref{T2}.  Finally, in Section~\ref{S:Extension}, we prove Theorem~\ref{T3}.  

\subsection*{Acknowledgements} J.~M. was supported by a Simons Collaboration grant. K.~N. was supported by JSPS KAKENHI Grant Number JP17H02854.

\section{Preliminaries}\label{S:preliminaries}

We use $C$ to denote a positive constant that may change from line to line, as well as the big-O notation.  We write $a\pm$ to denote $a\pm\eps$ for sufficiently small $\eps>0$.  We employ the standard little-o notation, with the following extension: if $X$ is a Banach space, then $f(t)=o(X)$ means $f(t)\to 0$ in $X$ as $t\to\infty$.

We denote the H\"older dual of $r\in[1,\infty]$ by $r'$.  The $L^2$ inner product is denoted by 
\[
 \LR{f,g} = \int_{\R^d} f(x)\overline{g(x)}dx,
 \]
as well as its extensions as duality pairing. 
 
We write $\S(\R^d)$ to denote the Schwartz class. We use the Lorentz spaces $L^{p,q}(\R^d)$ for $1<p<\infty$ and $1\le q\le\infty$, which can be defined as the real interpolation Banach space
\[ 
 L^{p,q} = (L^\infty, L^1)_{\frac1p,q} 
 \] 
 (see e.g. \cite{BL}).
The closed subspaces of $L^\infty(\R^d)$ consisting of bounded continuous functions and continuous functions decaying at $|x|\to\infty$ are denoted by 
\begin{align*}
 & C_b(\R^d):=\{f:\R^d\to \C\text{: continuous and bounded}\},
 \\& C_0(\R^d):=\{f:\R^d\to \C\text{: continuous, }\lim_{|x|\to\infty}f(x)=0\},
\end{align*}
respectively.

We define the Fourier transform by
\[
\F f(\xi)=\hat f(\xi)=(2\pi)^{-\frac{d}{2}}\int_{\R^d} e^{-ix\xi}f(x)\,dx. 
\]
We write $e^{it\Delta}$ for the Schr\"odinger group, i.e. $e^{it\Delta}=\F^{-1} e^{-it|\xi|^2}\F$.  One may also write $e^{it\Delta}$ as a convolution operator, namely
\[
[e^{it\Delta}f](x) = (4\pi i t)^{-\frac{d}{2}}\int_{\R^d} e^{i|x-y|^2/4t}f(y)\,dy.
\] 
Introducing the modulation and dilation operators
\[
M(t) = e^{i|x|^2/4t} \qtq{and} [D(t)f](x) = (2it)^{-\frac{d}{2}}f(\tfrac{x}{2t}),
\]
we can then read off the useful factorization
\begin{equation}\label{factorization}
e^{it\Delta} = MD\F M
\end{equation}
(cf. \cite[Remark 2.2.5]{Cazenave}), where we have allowed ourselves (here and below) to omit the explicit dependence on $t$.  

We next recall the dispersive estimate for the free Schr\"odinger equation: 
\[
 \|e^{it\Delta}f\|_{C_b(\R^d)} \leq C |t|^{-\frac{d}{2}}\|f\|_{\M(\R^d)},
 \]
which follows immediately from the explicit formula of $e^{it\Delta}$ by convolution in $x$. 
Note, however, that we cannot replace $C_b$ with $C_0$, cf. the counterexample $f=\delta$. 
By complex interpolation, the above estimate implies 
\[
\|e^{it\Delta}f\|_{L^r(\R^d)}\leq C|t|^{-(\frac{d}{2}-\frac{d}{r})}\|f\|_{L^{r'}(\R^d)},\quad r\in[2,\infty], 
\]
which is one of the key ingredients in this paper.  These dispersive estimates also imply the well-known Strichartz estimates for $e^{it\Delta}$ (cf.~\cite{GV,KT,Strichartz}). We note that when $d=1$ we obtain the endpoint estimate
\begin{equation} \label{Stz 1D}
\|u\|_{L^4_t C_b} \leq C\{\|u_0\|_2 +\|(i\pt_t+\Delta)u\|_{L^{4/3}_t \M}\}.
\end{equation}

Strichartz estimates are the key ingredient for the well-posedness theory of \eqref{nls}. In particular, local well-posedness of \eqref{nls} in $L^2(\R^d)$  follows for $0<p<\tfrac{4}{d}$ with \eqref{cond V} via the well-known argument using Strichartz estimates and the Banach fixed point theorem.  See for example \cite[Corollary~4.6.5]{Cazenave}.  In that work, the potential is a time-independent function, but the argument applies equally well to the time-dependent case, as well as the case of measure-valued potentials in $d=1$ (using \eqref{Stz 1D} in place of the usual $L^\infty$-$L^1$ Strichartz estimate). Indeed, the assumption \eqref{cond V} on $V$ is more than enough for $L^2$ well-posedness; in fact, one may weaken it by replacing the space $L^{\frac{2}{p}-}(\R^d)$ with $L^\infty(\R^d)$.  

The existence time of solutions is thereby bounded from below uniformly with respect to the initial $L^2$ norm; moreover, the $L^2$ norm controls all Strichartz norms of the solution over the interval of local existence.  More precisely, for any $B>0$ there exists $T>0$ such that for any $u_0\in L^2(\R^d)$ with $\|u_0\|_2\le B$, \eqref{nls} has a unique solution $u$ on $[0,T]$ with $u(0)=u_0$ satisfying 
\[
 C\|u_0\|_2 \geq \|u\|_{ST(0,T)}:=\|u\|_{L^\infty_t L^2_x(0,T)} 
 +\begin{cases} \|u\|_{L^2_tL_x^{\frac{2d}{d-2}}(0,T)} &(d\ge 3),\\ \|u\|_{L^q_tL_x^r(0,T)} &(d=2),\\ \|u\|_{L^4_t C_b(0,T)} &(d=1), \end{cases} \]
where for the $d=2$ case we can choose any $\tfrac{1}{q}+\tfrac{1}{r}=\tfrac{1}{2}$ with $q>2$, and $X(0,T)$ denotes the restriction onto the time interval $(0,T)$ of the Banach space $X$ of space-time functions. 

The unique local solutions in $L^2$ satisfy the equation \eqref{nls} in the distribution sense: For any $\fy\in\S(\R^d)$, we have 
\[
 i\pt_t\LR{u(t),\fy} = \LR{-\Delta u(t)+V(t)u(t)+F(u(t)),\fy},\]
where the right side is locally integrable in $t$, thanks to the Strichartz estimates. 
We note that regarding the nonlinear term as a source, this yields the same solution as defined by the  $C^0$ unitary group $e^{it(\Delta-V)}$ of the self-adjoint operator $-\Delta+V$ in the case of $V:\R^d\to\R$. 
In fact, for $V\in(L^{d/2}+L^\infty)(\R^d)$ in $d\ge 2$ and $V\in(\M+L^\infty)(\R)$ in $d=1$, 
the Schr\"odinger operator $-\Delta+V$ is defined by the KLMN theorem (cf.~\cite[Theorem X.17]{RS2} as well as \cite[Example X.2.3]{RS2} for the case $V=\delta$) as a perturbation of $-\Delta$ by the quadratic form $\LR{V\fy,\fy}$, so that the linear equation can make sense in $H^{-1}(\R^d)$ for $H^1(\R^d)$ initial data (which is extended to $L^2(\R^d)$ initial data by density).

In Theorem~\ref{T2}, the assumptions on the solution $u$ imply that $u$ is uniformly bounded in $L^2$, and hence by the discussion above we get
\begin{equation} \label{unifST}
 \sup_{t_0>0} \|u\|_{ST(t_0,t_0+T)} \leq C \|u\|_{L^\infty_tL^2_x(0,\infty)} < \infty,
\end{equation}
for some $T>0$ that is uniform with respect to $\|u\|_{L^\infty_tL^2_x}$.

\section{Proof of Theorem~\ref{T2}}\label{S:T2}

In this section we will prove Theorem~\ref{T2}.  As mentioned above, the heart of the argument is the same as in \cite{Glassey, GlasseyKG, Strauss, Barab}:  assuming that $u$ admits the decomposition \eqref{u-decomposition} with nonzero $v_+$, we will show that for a suitable choice of $w(t)=e^{it\Delta}\varphi$, the derivative of the quantity $\langle u(t),w(t)\rangle$ (which is uniformly bounded in time) may be written as a main term, which is of size $t^{-\frac{dp}{2}}$, plus errors that are small compared to this main contribution.  As $t^{-\frac{dp}{2}}$ is not integrable in time for $p\leq \tfrac{2}{d}$, this yields a contradiction.

\begin{proof}[Proof of Theorem~\ref{T2}] 
We assume towards a contradiction that $u$ has the decomposition appearing in \eqref{u-decomposition} with some nonzero $v_+\in L^2$. Without loss of generality, we may assume $q<2$ is as close to $2$ as we will need.  Recall that $u\in L^\infty((0,\infty);L^2(\R^d))$ due to the assumed asymptotic behavior.  

Let $w=e^{it\Delta}\fy$ for some $\fy\in \S(\R^d)$ to be chosen below.  In particular, by the dispersive estimate, 
\begin{equation}\label{dispersive-for-w}
\|w(t)\|_{L^r(\R^d)} \leq C t^{\frac{d}{r}-\frac{d}{2}}\qtq{for} 2\leq r\leq\infty.
\end{equation}

Define
\begin{equation}\label{tildes}
 u=MD\ti u, \quad w=MD\ti w, \quad l=MD\ti l,
 \end{equation}
where $M=M(t)$ and $D=D(t)$ are as in Section~\ref{S:preliminaries}. Then 
\[
 \|u-w\|_2 = \|\ti u-\ti w\|_2 \qtq{and} \|\ti l\|_q = t^{-(\frac{d}{q}-\frac{d}{2})}\|l\|_q \to 0. \]

We begin by using \eqref{nls} and a change of variables to compute
\begin{equation}\label{time-derivative}
\begin{aligned}
 i\pt_t\LR{u,w} &= \LR{F(u),w} + \LR{Vu,w}
 \\ &= t^{-\frac{dp}{2}}\LR{F(\ti u),\ti w}+ \LR{Vu,w},
\end{aligned}
\end{equation}
where 
\[
F(u) = \mu |u|^p u \qtq{or} F(u) = \mu (|x|^{-\frac{dp}{2}}\ast |u|^2)u. 
\]

\subsection{Analysis of the potential term} We first treat the potential term.  Using \eqref{cond V}, we may decompose $V=V_1+V_2$ so that
\[
 V_1\in L^\infty_t L^{\frac{2}{p}-}_x\qtq{and} V_2\in\begin{cases} L^\infty_t L^{\frac{d}{2}}_x &(d\ge 3),\\
 L^\infty_t L^{1+}_x &(d=2),\\
 L^\infty_t \M_x &(d=1). 
 \end{cases} \] 
 
The contribution of $V_1$ is controlled as follows:  by H\"older's inequality and \eqref{dispersive-for-w}, we have
\begin{equation} \label{V2 bd}
 |\LR{V_1u,w}| \le \|V_1\|_{L^\infty_tL^{\frac{2}{p}-}}\|u\|_{L^\infty_tL^2}\|w(t)\|_{L^{\frac{2}{1-p}+}}
 \leq C \|u\|_{L^\infty_tL^2}\cdot t^{-\frac{dp}{2}-},
\end{equation}
where we have used the condition $p<1$. 
  
For the $V_2$ part, we will prove an estimate for the integral over $[0,\tau]$ for arbitrary $\tau\geq 2$ (rather than a pointwise-in-time estimate); as we will ultimately show that the main term in \eqref{time-derivative} is of size $t^{-\frac{dp}{2}}$, we seek to establish a bound for the integral that is $o(\alpha(\tau))$, where
\begin{equation}\label{alpha}
\alpha(\tau) = \begin{cases} \tau^{1-\frac{dp}{2}} & p<\tfrac{2}{d} \\ \log\tau & p=\tfrac{2}{d}.\end{cases}
\end{equation}  

We recall the local-existence time $T$ from Section~\ref{S:preliminaries} and the local Strichartz bound \eqref{unifST}. 

First, 
if $d=1$ then for any $t_0>0$ we have 
\[
 \int_{t_0}^{t_0+T} |\LR{V_2u,w}| dt \leq T^{\frac34} \|V_2\|_{L^\infty_t \M} \|u\|_{L^4_tC_b} \|w\|_{L^\infty_tC_b(t_0,\infty)}  = O(t_0^{-\frac12}).\]
Similarly, for $d=2$ we have
\[
 \int_{t_0}^{t_0+T} |\LR{V_2u,w}| dt \leq T^{\frac12+} \|V_2\|_{L^\infty_t L^{1+}} \|u\|_{L^{2+}_tL^{\infty-}} \|w\|_{L^\infty_{t,x}(t_0,\infty)} = O(t_0^{-1}),
 \]
while if $d=3$ we have 
\[
 \int_{t_0}^{t_0+T} |\LR{V_2u,w}| dt \leq \|V_2\|_{L^\infty_tL^{\frac{d}{2}}}T^{\frac34}\|u\|_{L^4_t L^3(t_0,t_0+T)}\|w\|_{L^\infty_{t,x}(t_0,\infty)}
 =O(t_0^{-3/2}).
 \]
For $d\ge 4$, we can actually obtain an integrable pointwise-in-time estimate, namely
\[
 |\LR{V_2u,w}| \leq \|V_2\|_{L^\infty_tL^{\frac{d}{2}}}\|u\|_{L^\infty_tL^2}\|w(t)\|_{L^{\frac{2d}{d-4}}} 
 =O(t^{-2}). \]
Thus we obtain for $\tau>2$, 
\begin{equation} \label{V1 bd}
 \int_0^{\tau}|\LR{V_2u,w}|dt = \begin{cases} O(\tau^{\frac12}) &(d=1),\\ O(\log \tau) &(d=2),\\ O(1) &(d\ge 3),\end{cases}
\end{equation}
which in all cases is  $o(\alpha(\tau))$ provided $p\leq \tfrac{2}{d}$ and $p<1$.  This completes the treatment of the potential term.

We next turn to the main term $\langle F(\tilde u),\tilde w\rangle$. 

\subsection{Analysis of the power-type nonlinearity.} For the power-type nonlinearity, we claim that
\begin{equation}\label{Main-term1}
\langle F(\tilde u),\tilde w\rangle = \langle F(\hat v_+),\hat\varphi\rangle + o(1) \qtq{as}t\to\infty, 
\end{equation}
for which it suffices to prove
\begin{align}
\tilde w &  =\hat \fy + o(L^2\cap L^\infty), \label{J1} \\
F(\tilde u) & = F(\hat v_+) +o(L^{\frac{2}{1+p}}+L^{\frac{2}{1+p}-})\label{J2}
\end{align}
as $t\to\infty$. Here we are using the notation $o(X)$ from Section~\ref{S:preliminaries}. Note that to choose an exponent $\tfrac{2}{1+p}-\in[1,\infty]$ requires $p<1$. 

To prove \eqref{J1}, observe that by definition of $\tilde w$ and \eqref{factorization} we have $\tilde w= \F M\varphi$; thus we may use Hausdorff--Young to estimate
\begin{equation} \label{w 2 fy}
 \|\ti w-\hat\fy\|_r \le C\|(M-1)\fy\|_{r'} \le Ct^{-1}\||x|^2\fy\|_{r'}
\end{equation}
uniformly in $t$ for any $r\in[2,\infty]$.  

For \eqref{J2}, we firstly observe that by \eqref{factorization} we have
\begin{align*}
u-e^{it\Delta}v_+ - l & = MD[\tilde u -\hat v_+-\tilde l]+MD\F[M-1] v_+,
\end{align*}
so that by hypothesis and dominated convergence we obtain
\begin{equation}\label{u-decomposition2}
\tilde u - \hat v_+-\tilde l = o(L^2)\qtq{as}t\to\infty.
\end{equation}
As $\tilde l$ is $o(L^{2-})$, we can therefore use H\"older to estimate
\begin{multline*}
 \|F(\ti u)-F(\hat v_+)\|_{L^{\frac{2}{1+p}}+L^{\frac{2}{1+p}-}}
 \le C\|(|\ti u-\ti l-\hat v_+|+|\ti l|)(|\ti u|+|\hat v_+|)^p\|_{L^{\frac{2}{1+p}}+L^{\frac{2}{1+p}-}}
 \\ \le C[\|\ti u-\ti l-\hat v_+\|_2+\|\ti l\|_{2-}][\|\ti u\|_2+\|\hat v_+\|_2]^p \to 0
\end{multline*}
as $t\to\infty$.

As $\hat v_+\neq 0$, we must have $F(\hat v_+)\not=0$, so that we may now choose $\fy\in \S(\R^d)$ satisfying 
\[
\LR{F(\hat v_+),\hat\fy}>0.
\] 
Then, continuing from \eqref{time-derivative} and using \eqref{Main-term1}, \eqref{V2 bd}, and \eqref{V1 bd}, we have
\begin{align*}
\int_1^\tau \Re i \partial_t\LR{u,w}\,dt = \int_1^\tau t^{-\frac{dp}{2}}(\LR{F(\hat v_+),\hat\fy}+o(1))\,dt+ o(\alpha(\tau)) \geq C\alpha(\tau) 
\end{align*}
as $\tau\to\infty$, where $\alpha(\cdot)$ is as in \eqref{alpha}. As $\alpha(t)\to\infty$ as $t\to\infty$, this contradicts the fact that $|\LR{u,w}| \le \|u\|_2\|\fy\|_2$ is bounded uniformly in time. 

\subsection{Analysis of the Hartree-type nonlinearity} It remains to treat the case of the Hartree nonlinearity in \eqref{time-derivative}.  This case is a bit more subtle than the power-type case, as we must incorporate the contribution of $\tilde l$ to identify the asymptotic behavior of $\langle F(\tilde u),\tilde w\rangle$.  For example, if $l$ has the form $|l(t,x)|=Q(x-ct)$, then 
\[
 |\ti l(t,x)|^2 = t^d|l(t,tx)|^2 = t^d Q^2(t(x-c)) \to \|Q\|_2^2 \delta(x-c)
 \]
as $t\to\infty$, which gives a non-trivial contribution to $\LR{F(\ti u),\ti w}$.  

Writing 
\[
\langle F(\tilde u),\tilde w\rangle = \langle |x|^{-\frac{dp}{2}}\ast |\tilde u|^2,\bar{\tilde u} \tilde w \rangle,
\]
we will show
\begin{align}
\overline{\tilde u}\tilde\omega & = \overline{\hat v_+}\hat\varphi + o(L^1\cap L^{2-}),\label{J4} \\
|x|^{-\frac{dp}{2}}\ast |\tilde u|^2 & = |x|^{-\frac{dp}{2}}\ast(|\hat v_+|^2 + |\tilde l|^2)+o(L^{\frac{2}{p},\infty}),\label{J3}
\end{align} 
where we once again recall the notation $o(X)$ from Section~\ref{S:preliminaries}. 

To prove \eqref{J4} we use \eqref{w 2 fy}, \eqref{u-decomposition2}, and H\"older to write
\begin{align*}
 \overline{\ti u}\ti w &= [\overline{\hat v_+}+ o(L^{2-}) + o(L^2)][\hat \fy+ o(L^2\cap L^\infty)] 
 \\& = \overline{\hat v_+} \hat \fy + o(L^1\cap L^{2-}).
\end{align*}

For \eqref{J3} we observe that as $\|\ti l\|_2$ is bounded and $\|\ti l\|_q\to 0$ as $t\to\infty$, we have $|\ti l|\to 0$ weakly in $L^2$.  Thus
\[
 \|\hat v_+\ti l\|_1 = \LR{|\hat v_+|,|\ti l|} \to 0 \qtq{as}t\to\infty,
 \]
which yields
\[
 |\ti u|^2 = |\hat v_+ + \ti l + o(L^2)|^2 = |\hat v_+|^2 + |\ti l|^2 + o(L^1).
 \]
Using the triangle inequality for the $L^{\frac{2}{p},\infty}$ norm, translation invariance of the weak Lebesgue space, and the fact that $|x|^{-\frac{dp}{2}}\in L^{\frac{2}{p},\infty}$, this implies
\[
 |x|^{-\frac{dp}{2}}*|\ti u|^2 = |x|^{-\frac{dp}{2}}*(|\hat v_+|^2+|\ti l|^2) + o(L^{\frac{2}{p},\infty}),
 \]
 yielding \eqref{J3}. 
 
 Now, since $p<1$, we have $(\tfrac{2}{p})'=\tfrac{2}{2-p}\in(1,2)$, so that $L^1 \cap L^{2-} \subset L^{\frac{2}{2-p},1}$.  Thus, \eqref{J4} and \eqref{J3} together imply
 \[
 \tfrac{1}{\mu}\langle F(\tilde u),\tilde w\rangle = \langle |x|^{-\frac{dp}{2}}\ast(|\hat v_+|^2+|\tilde l|^2),\overline{\hat v_+}\hat\varphi\rangle + o(1)
 \]
 as $t\to\infty$.  We now define 
\[
  \hat v_+'(x) := \begin{cases} \hat v_+(x) &|\hat v_+(x)|\le n\\ 0 &|\hat v_+(x)|>n, \end{cases} 
  \]
where (recalling $\hat v_+\neq 0$)  we choose $n$ large enough so that $\hat v_+'$ is non-zero in $L^2\cap L^\infty\subset L^{\frac{2}{1-p},2}$. We write
\begin{align*}
\tfrac{1}{\mu}\langle F(\tilde u),\tilde w\rangle & = \langle |x|^{-\frac{dp}{2}}\ast(|\hat v_+|^2 + |\tilde l|^2),|\hat v_+'|^2\rangle \\
&\quad +\langle |x|^{-\frac{dp}{2}}\ast(|\hat v_+|^2+|\tilde l|^2),\overline{\hat v_+}[\hat\varphi-\hat v_+']\rangle \\
& \quad + \langle |x|^{-\frac{dp}{2}}\ast(|\hat v_+|^2+|\tilde l|^2), [\overline{\hat v_+ - \hat v_+'}]\hat v_+'\rangle + o(1)
\end{align*}
and observe that the third term on the right-hand side vanishes identically by construction. Noting that $\F \S$ is dense in $L^{\frac{2}{1-p},2}$, we now choose $\varphi\in\S$ so that
\[
\bigl|\langle |x|^{-\frac{dp}{2}}\ast (|\hat v_+|^2+|\tilde l|^2),\overline{\hat v_+}[\hat\varphi-\hat v_+']\rangle\bigr| \leq C \|u\|_{L_t^\infty L^2}^2 \|v_+\|_2 \|\hat\varphi-\hat v_+'\|_{L^{\frac{2}{1-p},2}}
\]
is as small as we wish. Then we finally get
\[
\Re \tfrac{1}{\mu}\langle F(\tilde u),\tilde w\rangle \geq \tfrac12\langle |x|^{-\frac{dp}{2}}\ast |\hat v_+|^2,|\hat v_+'|^2\rangle + o(1)
\]
as $t\to\infty$.  This leads to a contradiction just as in the power-type case and hence completes the proof of Theorem~\ref{T2}.\end{proof}

\section{Proof of Theorem~\ref{T3}}\label{S:Extension}

The proof of Theorem~\ref{T2} breaks down at $p=1$ both in treating the potential term and the localized part $l$.  The Hartree case has another issue, namely, that $F(u)$ cannot be controlled by $u\in L^2$ in the distributional sense. Thus in this section we will consider only the power-type nonlinearity with $p=1\leq \tfrac{2}{d}$. 
\begin{proof}[Proof of Theorem~\ref{T3}] Suppose towards a contradiction that $u$, $V$, $l$, $v_+$, and $\nu$ are as in Theorem~\ref{T3}, but $v_+\neq 0$. 

The general strategy is the same as in the proof of Theorem~\ref{T2}.  As in that proof, we let $w(t) = e^{it\Delta}\varphi$ for $\varphi\in \S(\R^d)$ to be determined below and introduce $\tilde u$, $\tilde w$, and $\tilde l$ as in \eqref{tildes}.  Let us also denote
\[
\tilde V(t,x) = t^{\frac{d}{2}} V(t,tx). 
\]

 As in \eqref{time-derivative}, we have
\[
i\partial_t \langle u, w\rangle = t^{-\frac{d}{2}}\langle F(\tilde u),\tilde w\rangle+\langle Vu,w\rangle.
\]
Recalling \eqref{J1} and observing that
\[
\tfrac{1}{\mu} F(\tilde u) = |\tilde u| u = |\hat v_+|\hat v_+ + |\tilde l| \tilde l+o(L^1)
\]
(cf. \eqref{u-decomposition2}), we can write
\begin{equation}\label{Zmain1}
\tfrac{1}{\mu}\langle F(\tilde u), \tilde w\rangle = \langle |\hat v_+| \hat v_+,\hat\varphi\rangle + \langle |\tilde l|\tilde l,\hat\varphi\rangle+o(1). 
\end{equation}

Below we will construct $\varphi\in \mathcal{S}(\R^d)$ such that the `main term' coming from $\hat v_+$ satisfies
\begin{equation}\label{Zlb}
\|\hat v_+\|_{L^2}^2 \sim \langle |\hat v_+|\hat v_+,\hat\varphi\rangle \gg \langle \nu,|\hat\varphi|\rangle.
\end{equation}
On the other hand, the assumption \eqref{T3-hypothesis} guarantees 
\[
| \langle |\tilde l| \tilde l,\hat\varphi\rangle| \leq \langle |\tilde l|^2,|\hat\varphi|\rangle \leq \langle \nu, |\hat \varphi|\rangle + o(1),
\]
which is therefore dominated by the main contribution coming from $\hat v_+$. 

We will control the potential term by 
\begin{equation}\label{control-potential}
t^{-\frac{d}{2}}\langle \nu,|\hat\varphi|\rangle+o(t^{-\frac{d}{2}}),
\end{equation}
which is also dominated by the main term.  In particular, we will have that $\Re i\partial_t\langle u,w\rangle$ has a non-integrable lower bound of $Ct^{-\frac{d}{2}}$ (cf. $d\in\{1,2\}$), thus leading to the same contradiction as in the proof of Theorem~\ref{T2}. 

For the estimate of the potential term, we first write
\begin{align*}
|\langle Vu,w\rangle| & \leq |\langle V(u-l),w\rangle| + t^{-\frac{d}{2}}|\langle \tilde V \tilde l,\tilde w\rangle|.
\end{align*}
Now, given $\eps>0$ we may select $\tilde v_+ \in \mathcal{S}(\R^d)$ so that 
\[
\|v_+-\tilde v_+\|_{L^2}<\eps. 
\]
We can then use \eqref{u-decomposition} and the dispersive estimate to write
\begin{align*}
|\langle V(u-l),w\rangle| & \leq |\langle V e^{it\Delta}\tilde v_+,w\rangle| + |\langle V e^{it\Delta}[v_+-\tilde v_+],w\rangle| + |\langle V\cdot o(L^2),w\rangle| \\
& \leq \|V\|_{L_t^\infty L^{2-}}\|e^{it\Delta} \tilde v_+\|_{L^{2+}}\|w\|_{L^\infty} + \|V\|_{L_t^\infty L^2}\|w\|_{L^\infty}[\eps+o(1)] \\
& = o(t^{-\frac{d}{2}})+ C\eps t^{-\frac{d}{2}},
\end{align*}
which is an acceptable contribution to \eqref{control-potential}.  On the other hand, recalling \eqref{J1} and applying \eqref{T3-hypothesis},  we get
\begin{align*}
t^{-\frac{d}{2}}|\langle \tilde V \tilde l,\tilde w\rangle|& \leq \tfrac12 t^{-\frac{d}{2}}\{ \langle |\tilde V|^2,|\hat\varphi|\rangle + \langle|\tilde l|^2,|\hat \varphi|\rangle\} + o(t^{-\frac{d}{2}})\cdot\| V\|_{L_t^\infty L^2} \| l\|_{L_t^\infty L^2} \\
& \leq t^{-\frac{d}{2}}\langle \nu,|\hat\varphi|\rangle + o(t^{-\frac{d}{2}}),
\end{align*}
which is again an acceptable contribution to \eqref{control-potential}.

To complete the proof of Theorem~\ref{T3}, it therefore remains to find $\varphi\in\mathcal{S}(\R^d)$ so that \eqref{Zlb} holds.  We turn to this now.
%
%

As $\nu$ is singular with respect to Lebesgue measure, there exists a null set $N\subset\R^d$ such that $\nu(N^c)=0$. Then fixing $\eps>0$, there is a sequence of balls $\{B_k\}\subset\R^d$ such that $N\subset U:=\bigcup_k B_k$ and $\sum_k|B_k|<\eps.$  

As $\supp\nu\subset U$ and $\nu$ is a finite measure, there exists $n\in\N$ such that $\nu(\bigcup_{k>n}B_k)<\eps$. Thus, defining  $W:=\bigcup_{k\le n}B_k,$ we have $\nu(W^c)<\eps$.  

We now claim that there exists a cut-off function $\psi\in C_c^\infty(\R^d)$ such that $0\le\psi\le 1$, $\psi= 1$ on $W$, and $\|\psi\|_{L^1}\lesssim\eps$. To see this, take $\chi\in C_c^\infty(|x|\le 2)$ satisfying $\chi(x)=1$ for $|x|\le 1$ and take $\lambda\in C^\infty$ satisfying $\lambda(t)=1$ for $t\ge 1$ and $\lambda(t)=0$ for $t\le 1/2$.  Writing $B_k=B(c_k,r_k)$, we can then set
\[
 \psi(x)=\lambda\bigl(\sum_{k\le n}\chi(\tfrac{x-c_k}{r_k})\bigr).
 \]

Choosing a sequence of $\eps_n\to 0$, we can therefore obtain a sequence $\psi_n\in C_c^\infty(\R^d)$ such that $0\le\psi_n\le 1$, $\supp\psi_n$ is contained in a fixed bounded set, $\|\psi_n\|_{L^1}\to 0$, and $\psi_n=1$ on some open set $W_n$ satisfying $\nu(W_n^c)\to 0$. Passing to a subsequence, we may also assume $\psi_n\to 0$ a.e. on $\R^d$. 

Now take a sequence $\phi_n\in \S(\R^d)$ such that $\hat\phi_n$ is bounded in $L^\infty(\R^d)$ and 
\[
\LR{|\hat v_+|\hat v_+,\hat\phi_n}\to\||\hat v_+|\hat v_+\|_1=\|\hat v_+\|_2^2.
\]
We then define $\varphi_n\in\S(\R^d)$ by $\hat\varphi_n=(1-\psi_n)\hat\phi_n$. Using $\hat v_+\in L^2$, $\hat\phi_n$ is bounded in $L^\infty$, and $\psi_n\to 0$ a.e., the dominated convergence theorem implies 
\[
 \LR{|\hat v_+|\hat v_+,\hat\varphi_n} \to \|\hat v_+\|_2^2 >0, 
 \]
while 
\[
 \LR{\nu,|\hat\varphi_n|} \le  \nu(W_n^c) \|\hat\fy_n\|_\infty\to 0.
 \]
Hence for large $n$, we have 
\[
 \|\hat v_+\|_2^2 \sim \LR{|\hat v_+|\hat v_+,\hat\varphi_n} \gg  \LR{\nu,|\hat\varphi_n|}, 
 \]
and so we can achieve \eqref{Zlb} by choosing $\varphi=\varphi_n$ for sufficiently large $n$.  This completes the proof of Theorem~\ref{T3}.\end{proof}

\end{document}